\def\to{\rightarrow}
\def\be{\begin{equation}}
\def\bs{\bigskip}
\def\BB{{\mathcal B}}
\def\DD{{\mathcal{D}}}
\def\HH{{\mathcal H}}
\def\LL{{\mathcal{L}}}
\def\Tt{\tilde{U}}
\def\KKt{\tilde{\KK}}
\def\Th{{\Theta}}
\def\Hb{{\textbf{H}}}
\def\Tb{{\textbf{T}}}
\def\KK{{\mathcal K}}
\def\C{{\mathbb{C}}}
\def\D{{\mathbb{D}}}
\def\Z{{\mathbb{Z}}}
\def\T{{\mathbb T}}
\title{On a connection between Naimark's dilation theorem, spectral representations, and characteristic functions}
\author{Mishko Mitkovski}
\address{Texas A\&M University
\\ Department of Mathematics\\
College Station, TX 77843, USA } \email{mmitkov@@math.tamu.edu}
\theoremstyle{plain}
\newtheorem{theorem}{Theorem}[section]
\newtheorem{corollary}[theorem]{Corollary}
\theoremstyle{definition}
\newtheorem{remark}{Remark}
\newenvironment{proof}
{{\noindent\it Proof:}}{\hfill$\Box$}
\numberwithin{equation}{section}
\begin{document}
\begin{abstract}
We give a Herglotz-type representation of an arbitrary generalized spectral measure. As an application, a new proof of the classical Naimark's dilation theorem is given. The same  approach is used to describe the spectrum of all unitary rank-one perturbations of a given partial isometry. 
\end{abstract}

\maketitle

\section{Introduction}

Let $\BB$ be a family of all Borel sets on the unit circle $\T$. By a generalized spectral measure on $\T$ we mean a function $B:\BB\to\BB(\HH)$ whose values are positive bounded self-adjoint operators on $\HH$ such that $B(\emptyset)=0$, $B(\T)=I$ and for every sequence $\Delta_1,\Delta_2,...$ of mutually disjoint Borel sets, we have $$B(\Delta_1\cup\Delta_2\cup...)=\sum_{i=1}^{\infty}B(\Delta_i)$$ in the strong operator topology. If we require all the values to be orthogonal projections then we have an (ordinary) spectral measure. A classical theorem of Naimark~\cite{Naimark1} says that any generalized spectral measure can be represented as a projection of an ordinary spectral measure. This theorem is considered by many as the beginning of Dilation Theory. Since then many different proofs and generalization have appeared (e.g.~\cite{Naimark2, Foias, Stinespring, Paulsen}). We propose yet another approach, which involves in a natural way characteristic functions and spectral representations of unitary operators; it also relates Naimark's theorem for the first time to the subject of rank-one perturbations of a given operator. The latter is another classical subject with a rich literature behind (see~\cite{Poltoratski}  and the references therein). The key idea in our approach is to obtain a representation for a generalized spectral measure which is reminiscent to the well known one:

\[\left< (U+zI)(U-zI)^{-1} h_1 | h_2 \right> = \int_\T \frac{\xi+z}{\xi -z } \; d\left<(E(\xi)h_1 | h_2 \right>\]
 relating a unitary operator $U$ to its spectral measure $E$. We will show that if $\KK$ is a closed subspace of $\HH$ and the generalized spectral measure $B:\BB\to\BB(\KK)$ is obtained from a spectral measure $E:\BB\to\BB(\HH)$ by $B(\Delta)=P_\KK E(\Delta)$ then the above mentioned representation is given by:          

$$\left< (\Tt+\Th_S(z))(\Tt-\Th_S(z))^{-1} k_1 | k_2 \right>=\int_\T \frac{\xi+z}{\xi -z } \; d\left<(B(\xi)k_1 | k_2 \right>$$
where $\Tt:U^*(\KK) \to \KK$ is the restriction of $P_{\KK}U$ on $U^*(\KK)$ and $\Th_S(z)$ is the characteristic function corresponding to the operator $S:=(I-P_{\KK})U$. In the case when $\KK$ is one-dimensional this relation reduces to a relation that D.~Clark~\cite{Clark} used in his treatment of the rank-one perturbations of a restricted shift. This is the point where the connection between these seemingly unrelated concepts is made. 

The  paper is organized as follows. The next section presents some preliminary material. Section 3 contains the main theorem which contains the above mentioned representation. As an application, a new proof of Naimark's dilation theorem is given. In the last section an application to the rank-one perturbations of a partial isometry is given.  

\section{Preliminaries}

We recall the basic facts from the the model theory of completely nonunitary contractions developed by Sz. Nagy and Foias~\cite{Foias}. Let $T \in \BB(\HH)$ be a contraction, $\|T\| \leq 1$. The self adjoint operators $D_T:=(I-T^*T)^{1/2}$ and $D_{T^*}:=(I-TT^*)^{1/2}$ are called the defect operators of $T$ and the spaces $\DD_T:=\overline{D_T\HH}$, $\DD_{T^*}:=\overline{D_{T^*}\HH}$ are called the defect spaces for $T$. The defect indices are defined by $\partial_T:=\text{dim}\DD_T$ and $\partial_{T^*}:=\text{dim}\DD_{T^*}$. These indices measure, in a certain sense, how much a contraction differs from a unitary  operator. If $T$ is a partial isometry then the defect operators $D_T$ and $D_{T^*}$ are orthogonal projections onto the initial and the final space of $T$ respectively. 

The characteristic function of a contraction $T$ is an operator-valued function $\Th_T(z):\DD_T \to \DD_{T^*}$ defined by: $$ \Th_T(z):=-T+z D_{T^*}(I-zT^*)^{-1}D_T|\DD_{T}.$$ It is always an analytic contraction-valued function. A contraction-valued function $\Th:\D \to \BB(\LL,\LL^*)$ is called pure if $\|\Th(0)l\|<\|l\|$ for $l\in \LL$, $l\neq 0$. Two contraction-valued analytic functions $\Th_1: \D \to \BB(\LL_1,\LL_1^*)$, $\Th_2: \D \to \BB(\LL_2,\LL_2^*)$ are said to coincide if there exist unitaries $\omega:\LL_1 \to \LL_2$, $\omega_*:\LL_1^* \to \LL_2^*$ such that $\Th_1(z)={\omega_*}^{-1} \Th_2(z) \omega$. There is an easy characterization of partial isometries in terms of their characteristic functions. Namely, a contraction $T$ is a partial isometry if and only if $\Th_T(0)=O$.

A contraction $T$ is said to be completely nonunitary (c.n.u.) if it is not unitary on any of its invariant subspaces. There always exists a unique decomposition $\HH=\HH_0 \oplus \HH_1$ into subspaces reducing $T$, such that $T_{|\HH_0}$ is unitary and $T_{|\HH_1}$ is c.n.u..  
 
One of the main theorems of Sz. Nagy and Foias theory is the fact that every c.n.u. contraction $T$ on a separable Hilbert space is unitarily equivalent to an operator $\Tb$ acting on the space $$\Hb=[H^2(\DD_{T^*})\oplus\overline{\Delta_T L^2(\DD_T)}]\ominus\{\Th_Tu\oplus\Delta_T u: u\in H^2(\DD_T)\},$$  
by $$\Tb(u\oplus v)=\textbf{P}_{\Hb}(zu\oplus zv),$$ where $\textbf{P}_{\Hb}$ is the orthogonal projection of $H^2(\DD_{T^*})\oplus\overline{\Delta_T L^2(\DD_T)}$ onto $\Hb$ and $\Delta_T(t):=(I-\Th_T^*(e^{it})\Th_T(e^{it}))^{1/2}.$
Here, as usual, $L^2(\DD_T)$ is the space of $\DD_T$-valued square integrable functions on $\T$ and $H^2(\DD_T)$ is the corresponding Hardy space.

The following fact will be important for us. For any given contraction-valued analytic function $\Th: \D \to \BB(\LL,\LL^*)$ which is pure, there exists a contraction $\Tb$ whose characteristic function coincides with $\Th(z)$.

In the case when $\Th_T(z)$ is an inner function (i.e. $\Th_T(\zeta)$ is an isometry for a.e. $\zeta \in \T$) then $\Hb$ has a simpler form $\Hb=H^2(\DD_{T^*})\ominus\{\Th_Tu: u\in H^2(\DD_T)\}.$ This happens if and only if $T^{*n}\to O$ as $n\to \infty$. If, in addition, the defect indices of $T$ are both equal to 1 then $\Hb=H^2(\D)\ominus\Th_T H^2(\D)$ is a space of analytic scalar-valued functions. In this case $\Th_T(z)$ can be viewed as a usual (scalar-valued) inner function. In~\cite{Clark}, all the unitary rank-one perturbations of this type of operators $T$ are examined. It is shown there that all the rank-one unitary perturbations can be parametrized by points $\alpha \in \T$ and that the spectral measures $\sigma_{\alpha}$ for each of these operators can be easily obtained from the characteristic function $\Th_T(z)$. Namely, they are determined by: $$\frac{\alpha+\Th_T(z)}{\alpha+\Th_T(z)}=\int_{\T}\frac{\xi+z}{\xi-z}d\sigma_{\alpha}(\xi).$$ The collection of measures that are associated to a given inner function (or more generally to an analytic self-map of the disc) in this way are called Clark (or Aleksandrov-Clark) measures~\cite{Poltoratski}.

\section{Main Results}
       
Let $U: \HH \to \HH$ be a unitary operator. Fix $h\in \HH$. Then the function $\left< (U+zI)(U-zI)^{-1} h | h \right>$ is  holomorphic in $z \in \D$ with positive real part. Thus, there exists a unique measure $\mu_h$ on the unit circle $\T$ such that

\[\left< (U+zI)(U-zI)^{-1} h | h \right> = \int_\T \frac{\xi+z}{\xi -z } \; d\mu_h(\xi).\]
Using polarization, for any $h_1, h_2\in \HH$ there exists a measure $\mu_{h_1, h_2}$ such that:
 
\[\left< (U+zI)(U-zI)^{-1} h_1 | h_2 \right> = \int_\T \frac{\xi+z}{\xi -z } \; d\mu_{h_1, h_2}(\xi).\]
Let $\Delta$ be any Borel set on the unit circle $\T$. Then $\mu_{h_1, h_2}(\Delta)$ is a skew-symmetric function of $h_1$ and $h_2$, linear in $h_1$, and bounded by $\|h_1 \| \|h_2\|$. Therefore, it can be represented as
\[ \mu_{h_1, h_2}(\Delta) = \left<E(\Delta)h_1 | h_2 \right>,\]
for some positive bounded operator $E(\Delta)$. It is well known that $E: \BB \to \BB(\HH)$ is an ordinary spectral measure. 

Let $\KK \subset \HH$ be a closed subspace and let $P_{\KK}:\HH\to\KK$ be the orthogonal projection onto $\KK$. Define $T,S : \HH \to \HH$ by $T=P_{\KK}U$ and $S=(I-P_{\KK})U$, respectively. Then $T$ and $S$ are partial isometries and the characteristic function $\Th_S(z):U^*(\KK)\to \KK$ satisfies $\Th_S(0)=O$ (and hence is pure). It will be useful to denote by $\Tt$ the restriction of $U$ on the closed subspace $U^*(\KK)$, and view it as an operator from $U^*(\KK)$ to $\KK$. Obviously, $\Tt u=Tu=Uu$ for any vector $u \in U^*(\KK)$ and $\Tt^*k=T^*k=U^*k$ for any $k \in \KK$. Observe also that $\Tt^*P_\KK=T^*P_\KK=U^*P_\KK^2=T^*$. 

For fixed $k\in \KK$, again $\left< (\Tt+\Th_S(z))(\Tt-\Th_S(z))^{-1} k | k\right>$ is a holomorphic function in $z \in \D$ with positive real part and consequently there exists a unique measure $\sigma_{k}$ satisfying $$\left< (\Tt+\Th_S(z))(\Tt-\Th_S(z))^{-1} k| k \right>=\int_\T \frac{\xi+z}{\xi -z } \; d\sigma_{k}(\xi).$$ By polarization again, for any $k_1, k_2 \in \KK$ there exists a measure $\sigma_{k_1, k_2}$ such that:
$$\left< (\Tt+\Th_S(z))(\Tt-\Th_S(z))^{-1} k_1 | k_2 \right>=\int_\T \frac{\xi+z}{\xi -z } \; d\sigma_{k_1, k_2}(\xi).$$
For any Borel set $\Delta$ on the unit circle let $B(\Delta)$ be the positive self-adjoint operator satisfying \[ \sigma_{k_1, k_2}(\Delta) = \left<B(\Delta)k_1 | k_2 \right>,\] for all $k_1, k_2 \in \KK$. It can be shown that $B: \BB \to \BB(\KK)$ is a generalized spectral measure.

\begin{theorem} \label{main} For any $k\in \KK$ the following equality holds:

$$\left< (U+zI)(U-zI)^{-1} k | k \right> =\left< (\Tt+\Th_S(z))(\Tt-\Th_S(z))^{-1} k| k \right>.$$

\end{theorem}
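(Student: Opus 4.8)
The plan is to compute both sides by expanding the resolvents as power series in $z$ and matching coefficients, i.e.\ to show that for every $n\geq 0$ and every $k\in\KK$ one has $\langle U^{-n}k\,|\,k\rangle$ (the $n$-th moment read off from the left side) equals the corresponding $n$-th ``moment'' produced by the operator $\Tt$ twisted by the characteristic function $\Th_S(z)$. Concretely, write $(U+zI)(U-zI)^{-1}=I+2\sum_{n\geq 1}z^nU^{-n}$ for $|z|<1$ (using unitarity of $U$), so the left-hand side has Taylor coefficients $\langle k|k\rangle$ and $2\langle U^{-n}k|k\rangle$ for $n\geq 1$. The right-hand side is $\langle(\Tt+\Th_S(z))(\Tt-\Th_S(z))^{-1}k|k\rangle$; since $\Tt$ is an invertible (indeed unitary) map $U^*(\KK)\to\KK$ and $\Th_S(z)$ is a contraction-valued analytic function with $\Th_S(0)=O$ mapping $U^*(\KK)\to\KK$, the operator $\Tt^{-1}\Th_S(z)\in\BB(U^*(\KK))$ is analytic and vanishes at $0$, so $(\Tt-\Th_S(z))^{-1}=(I-\Tt^{-1}\Th_S(z))^{-1}\Tt^{-1}$ is given by a convergent Neumann series near $0$. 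Thus both sides are holomorphic near $0$ with positive real part, and it suffices to match all Taylor coefficients.

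The main computational step is to identify the coefficients on the right. Expanding, $(\Tt+\Th_S(z))(\Tt-\Th_S(z))^{-1}=I+2\Th_S(z)(\Tt-\Th_S(z))^{-1}=I+2\sum_{n\geq 1}\Th_S(z)\bigl(\Tt^{-1}\Th_S(z)\bigr)^{n-1}\Tt^{-1}$. The heart of the matter is therefore a lemma of the form: for $k\in\KK$, the analytic $\KK$-valued function $\Th_S(z)\bigl(\Tt^{-1}\Th_S(z)\bigr)^{n-1}\Tt^{-1}k$, expanded in powers of $z$, reproduces exactly the vectors $P_\KK U^{-m}k$ in the appropriate degrees so that pairing against $k$ gives $\langle U^{-n}k|k\rangle$. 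To prove this I would unwind the definition $\Th_S(z)=-S+zD_{S^*}(I-zS^*)^{-1}D_S$ restricted to $\DD_S=U^*(\KK)$, where, since $S=(I-P_\KK)U$ is a partial isometry with initial space $U^*(\KK^\perp)$ and final space $\KK^\perp$, the defect projections satisfy $D_S=P_{U^*(\KK)}$, $D_{S^*}=P_\KK$, and use the identities already recorded in the excerpt, notably $\Tt u=Uu$ for $u\in U^*(\KK)$ and $\Tt^*P_\KK=T^*=U^*P_\KK$. A clean way to organize this is to observe that $\Th_S(z)$ acting on $U^*(\KK)$ equals $P_\KK U(I-z(I-P_\KK)U^*)^{-1}$ restricted suitably (after checking $-S|_{U^*(\KK)}=0$ since $U^*(\KK)$ is orthogonal to the initial space of $S$, wait — rather $S|_{U^*(\KK)}$ lands in $\KK^\perp$, so $-S|_{U^*(\KK)}=-S$ but $D_{S^*}D_S$-term contributes; this needs care), and then iterate the resolvent identity $(I-zU^*)^{-1}=(I-z(I-P_\KK)U^*)^{-1}+z(I-z(I-P_\KK)U^*)^{-1}P_\KK U^*(I-zU^*)^{-1}$ to telescope the Neumann series on the right into the full series $I+2\sum z^nU^{-n}$ sandwiched by $P_\KK$.

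The anticipated main obstacle is precisely this bookkeeping: correctly splitting $U^* = (I-P_\KK)U^* + P_\KK U^*$ inside the resolvent $(I-zU^*)^{-1}$ and verifying that each pass through the ``$P_\KK U^*$'' term corresponds to one application of $\Tt^{-1}$ in the product $\Th_S(z)(\Tt^{-1}\Th_S(z))^{n-1}\Tt^{-1}$, so that the grouped sum on the right reassembles $\langle(U+zI)(U-zI)^{-1}k|k\rangle$ exactly. Once the series identity is established for all Taylor coefficients near $z=0$, both sides are holomorphic functions on all of $\D$ with positive real part (the right side because $\Th_S(z)$ is a contraction and $\Tt$ is unitary, making $(\Tt+\Th_S(z))(\Tt-\Th_S(z))^{-1}$ have nonnegative real part — standard Cayley-transform positivity), and agreeing Taylor series at an interior point forces equality throughout $\D$ by the identity theorem, completing the proof. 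An alternative, perhaps slicker, route avoiding coefficient-chasing would be to verify directly the operator identity $(U-zI)^{-1}k = (\text{something built from }(\Tt-\Th_S(z))^{-1})$ by solving $(U-zI)g = k$ blockwise with respect to $\HH=\KK\oplus\KK^\perp$; I would attempt that first and fall back on the power-series argument if the block solve gets unwieldy.
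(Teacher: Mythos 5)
Your strategy is sound and, once the bookkeeping is done, it amounts to a resummed version of the paper's own proof: the paper also reduces both sides to $2\left<(I-\Th_S(z)\Tt^*)^{-1}k|k\right>-\|k\|^2$ versus $2\left<(I-zU^*)^{-1}k|k\right>-\|k\|^2$ and then verifies the resulting resolvent identity by a finite chain of operator manipulations, whereas you propose to expand both resolvents in Neumann series and telescope; both arguments rest on exactly the two facts you identify, namely $\Th_S(z)|_{\DD_S}=zP_\KK(I-zS^*)^{-1}|_{\DD_S}$ and the splitting $U^*=S^*+T^*$. Two points in your sketch need repair, neither fatal. First, the hesitation about $-S|_{U^*(\KK)}$ resolves in favor of your first instinct: the initial space of $S=(I-P_\KK)U$ is $U^*(\KK^\perp)$, so $\DD_S=U^*(\KK)=\ker S$ and $S$ annihilates $U^*(\KK)$ outright (for $u\in U^*(\KK)$ one has $Uu\in\KK$, hence $(I-P_\KK)Uu=0$); thus the $-S$ term in the characteristic function contributes nothing and $\Th_S(z)=zP_\KK(I-zS^*)^{-1}$ on $\DD_S$, with no residual term to worry about. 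Second, your resolvent identity has a transposition: since $T=P_\KK U$, the perturbing term is $T^*=U^*P_\KK$, not $P_\KK U^*$, so the correct identity is $(I-zU^*)^{-1}=(I-zS^*)^{-1}+z(I-zS^*)^{-1}U^*P_\KK(I-zU^*)^{-1}$. With that correction the telescoping does close up: iterating gives $(I-zU^*)^{-1}k=\sum_{n\ge0}\bigl[z(I-zS^*)^{-1}U^*P_\KK\bigr]^n k$ (using $S^*k=0$ for $k\in\KK$), and applying $P_\KK$ to each term and using $zP_\KK(I-zS^*)^{-1}U^*P_\KK=\Th_S(z)\Tt^*P_\KK$ yields $\left<(I-zU^*)^{-1}k|k\right>=\sum_{n\ge0}\left<(\Th_S(z)\Tt^*)^nk|k\right>=\left<(I-\Th_S(z)\Tt^*)^{-1}k|k\right>$, which is precisely the paper's key identity. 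Your final remarks on positivity of the real part and analytic continuation from a neighborhood of $0$ to all of $\D$ are correct (purity of $\Th_S$ gives $\|\Th_S(z)\|\le|z|$, so $\Tt-\Th_S(z)$ is invertible throughout $\D$), though the paper sidesteps them by proving the identity directly for every $z\in\D$.
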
 

\begin{proof}  The left- and right-hand side of the equality we are aiming to prove are equal to $2 \left< (I-\Th_S(z)\Tt^*)^{-1} k | k \right> - \|k\|^2$ and
$2 \left< (I-zU^*)^{-1} k | k \right> - \|k\|^2$, respectively. Hence, it is equivalent to show that:
\[ \left< (I-\Th_S(z)\Tt^*)^{-1} k | k \right> = \left< (I-zU^*)^{-1} k | k \right>. \] 
To prove this equality, we first notice that
\begin{align*}  \left< (I-\Th_S(z)\Tt^*)^{-1} k | k \right>  &= \left< (I-z P_\KK (I-zS^*)^{-1} \Tt^*)^{-1} k | k \right> \\
& = \left< (\Tt\Tt^*-z P_\KK (I-zS^*)^{-1} \Tt^*)^{-1} k | k \right> \\ &= \left< \Tt (\Tt-z P_\KK (I-zS^*)^{-1})^{-1} k | k \right> \\
&= \left< \Tt(\Tt(I-zS^*)(I-zS^*)^{-1}-z P_\KK (I-zS^*)^{-1})^{-1} k | k \right> \\
&= \left< \Tt(I-zS^*) (\Tt(I-zS^*)-z P_\KK)^{-1} k | k \right> \\
&=\left< \Tt(I-zS^*) (\Tt(I-zS^*)-z \Tt\Tt^* P_\KK)^{-1} k | k \right> \\
&= \left< \Tt (I-zS^*) (I-zS^*-z \Tt^* P_\KK)^{-1} \Tt^* k | k \right>.\end{align*}

Now, recalling that $\Tt^*P_\KK =T^*$, $T^*k=\Tt^*k$ we obtain

\begin{align*}& \left< (I-\Th_S(z)\Tt^*)^{-1} k | k \right> = \left< (I-zS^*) (I-zS^*-z T^*)^{-1} \Tt^* k | \Tt^* k \right>\\
&= \left< (I-zS^*-z T^*)^{-1} T^* k | T^* k \right> - \left<z (I-zS^*-z T^*)^{-1} T^* k | S T^* k \right> \\
&= \left< (I-zS^*-z T^*)^{-1} T^* k |T^* k \right>.\end{align*}

Since $ST^*k =0$, we finally have

\begin{align*}\left< (I-\Th_S(z)\Tt^*)^{-1} k | k \right> & =\left< (I-z(T+S)^*)^{-1} T^*k |T^* k \right> \\ 
&=\left< (I-zU^*)^{-1}U^* k |U^* k \right> \\ 
&=\left< U^*(I-zU^*)^{-1} k |U^* k \right> \\
&=\left< (I-zU^*)^{-1} k |k \right>.\end{align*}

\end{proof}

\begin{remark}
If $\KK=\HH$ then $S=O$ on $\HH$ and hence $\Th_S(z)=zI$. Also, clearly $\Tt=U$. Thus, we can view $(\Tt+\Th_S(z))(\Tt-\Th_S(z))^{-1}$ as a substitution for $(U+zI)(U-zI)^{-1}$ in the case when $\KK$ is a true subspace.
\end{remark}

\begin{corollary} \label{cor} For any Borel set $\Delta \subset \T$, $B(\Delta)=P_\KK E(\Delta)$.
\end{corollary}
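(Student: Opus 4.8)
The plan is to extract the corollary from Theorem~\ref{main} by invoking the uniqueness of the representing measure in the Herglotz representation, and then to upgrade the resulting scalar identity to an operator identity by polarization. First I would fix $k \in \KK$. By construction the left-hand side of Theorem~\ref{main} equals $\int_\T \frac{\xi+z}{\xi-z}\,d\mu_k(\xi)$, where $\mu_k$ is the measure attached to $U$ via $\mu_k(\Delta) = \left<E(\Delta)k|k\right>$, while the right-hand side equals $\int_\T \frac{\xi+z}{\xi-z}\,d\sigma_k(\xi)$, where $\sigma_k(\Delta) = \left<B(\Delta)k|k\right>$. Theorem~\ref{main} asserts that these two holomorphic functions on $\D$ agree, and the measure occurring in a Herglotz representation of a holomorphic function with positive real part on $\D$ is unique; therefore $\mu_k = \sigma_k$, i.e.
\[ \left<E(\Delta)k|k\right> = \left<B(\Delta)k|k\right> \qquad \text{for every Borel } \Delta \subset \T \text{ and every } k \in \KK. \]

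Next I would pass to operators. Since $\mu_{k_1,k_2}$ and $\sigma_{k_1,k_2}$ are obtained from their diagonal versions by the same polarization formula, the identity $\mu_k = \sigma_k$ for all $k \in \KK$ yields $\mu_{k_1,k_2} = \sigma_{k_1,k_2}$ for all $k_1, k_2 \in \KK$, and hence $\left<E(\Delta)k_1|k_2\right> = \left<B(\Delta)k_1|k_2\right>$ for all $k_1, k_2 \in \KK$. For $k_1, k_2 \in \KK$ one has $\left<E(\Delta)k_1|k_2\right> = \left<E(\Delta)k_1|P_\KK k_2\right> = \left<P_\KK E(\Delta)k_1|k_2\right>$, so $\left<B(\Delta)k_1|k_2\right> = \left<P_\KK E(\Delta)k_1|k_2\right>$ for all $k_1, k_2 \in \KK$, which is exactly $B(\Delta) = P_\KK E(\Delta)$ as operators on $\KK$. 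Equivalently, one may observe that $B(\Delta)$ and the compression $P_\KK E(\Delta)|_\KK$ are both bounded self-adjoint operators on $\KK$ with the same quadratic form, and hence coincide.

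I do not anticipate a genuine obstacle in this argument; the substantive content lies entirely in Theorem~\ref{main}. The only points deserving a line of care are the appeal to uniqueness of the Herglotz representing measure---already used tacitly when $\mu_k$ and $\sigma_k$ were introduced---and the remark that the off-diagonal measures are built by polarization, so that matching diagonals matches everything. It is also worth recording that, combined with the remark in the introduction, this identifies the abstractly defined generalized spectral measure $B$ with the compressed spectral measure $P_\KK E(\cdot)$, thereby closing the loop between the characteristic-function picture and Naimark's dilation theorem.
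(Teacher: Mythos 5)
Your argument is correct and is essentially the paper's own proof run in the opposite direction: the paper reduces the operator identity to the diagonal scalar identity via the projection and polarization, while you build up from the diagonal identity (via uniqueness of the Herglotz representing measure, which the paper leaves implicit) to the operator identity. No substantive difference.
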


\begin{proof}
To show the claim it is equivalent to show 
\[ \left< P_\KK E(\Delta)k_1 | k_2 \right> =  \left< B(\Delta)k_1 | k_2 \right> \] for all $k_1, k_2 \in \KK$. Since $P_\KK$ is an orthogonal projection, this is equivalent to: \[ \left<E(\Delta)k_1 | k_2 \right> =  \left< B(\Delta)k_1 | k_2 \right>. \] Therefore, using polarization it suffices to prove \[ \left<E(\Delta)k | k \right> =  \left< B(\Delta)k | k \right> \] for all $k \in \KK$. 
But, this easily follows from Theorem~\ref{main}.

\end{proof}

Next we give a new proof of Naimark's dilation theorem.
\begin{theorem}[Naimark~\cite{Naimark1}] Let $B: \BB \to \BB(\KK)$ be a generalized spectral measure. Then there exist $\HH \supset \KK$ and an ordinary spectral family $E: \BB \to \BB(\HH)$ such that for any Borel set $\Delta \subset \T$, $B(\Delta)=P_\KK E(\Delta)$.
\end{theorem}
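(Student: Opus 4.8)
The plan is to reverse the construction of Section 3: given an abstract generalized spectral measure $B:\BB\to\BB(\KK)$, we must manufacture a Hilbert space $\HH\supset\KK$ and an ordinary spectral measure $E$ whose compression to $\KK$ is $B$. The bridge is the Herglotz-type representation of Theorem~\ref{main} together with Corollary~\ref{cor}: the corollary shows that whenever $E$ arises from a unitary $U$ and $B(\Delta)=P_\KK E(\Delta)$, the operator-valued function $\Theta_S(z)$ (the characteristic function of $S=(I-P_\KK)U$) encodes $B$. So the strategy is to run this backward — extract a candidate characteristic function from $B$, realize it via Nagy--Foias as the characteristic function of a genuine contraction, reassemble a unitary, and check the compression identity.

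\medskip

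Concretely, first I would form, for $k_1,k_2\in\KK$, the measures $\sigma_{k_1,k_2}$ coming from the given $B$ (i.e. $\sigma_{k_1,k_2}(\Delta)=\langle B(\Delta)k_1|k_2\rangle$), and consider the analytic $\BB(\KK)$-valued function
\[
F(z)=\int_\T\frac{\xi+z}{\xi-z}\,dB(\xi),
\]
which has positive real part and satisfies $F(0)=I$. From the scalar case we know such a function can be written as $F(z)=(W+V(z))(W-V(z))^{-1}$ for a suitable pair; the goal is to identify $W$ with a partial isometry $\Tt:\UU\to\KK$ and $V(z)$ with a pure contraction-valued analytic function $\Theta(z):\UU\to\KK$, exactly mirroring the roles of $\Tt$ and $\Theta_S$ in Theorem~\ref{main}. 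Since $\Theta(z)$ is pure, the Nagy--Foias theory (the ``following fact'' quoted in the Preliminaries) furnishes a c.n.u.\ contraction $S$ acting on some space, with $\DD_{S^*}=\KK$ and $\DD_S=\UU$, whose characteristic function coincides with $\Theta(z)$; because $\Theta(0)=O$ this $S$ is a partial isometry. Now set $\HH$ to be the model space on which $S$ acts (after identifying defect spaces) and define $U=S+T$, where $T$ is the partial isometry with initial space $\UU=\DD_S$ and final space $\KK=\DD_{S^*}$ built so that $S^*S+T^*T=I$ and $SS^*+TT^*=I$; one checks $U$ is unitary. Let $E$ be the spectral measure of $U$.

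\medskip

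Finally, I would invoke Theorem~\ref{main} in the direction we now have available: with this $U$, $\KK$, $S$ and $\Tt$ (the restriction of $U$ to $U^*(\KK)$, which equals the $W$ from the factorization), the theorem gives
\[
\langle(U+zI)(U-zI)^{-1}k|k\rangle=\langle(\Tt+\Theta_S(z))(\Tt-\Theta_S(z))^{-1}k|k\rangle=\int_\T\frac{\xi+z}{\xi-z}\,d\langle B(\xi)k|k\rangle
\]
for every $k\in\KK$, the last equality holding by construction since $\Theta_S=\Theta$. Comparing with $\langle(U+zI)(U-zI)^{-1}k|k\rangle=\int_\T\frac{\xi+z}{\xi-z}\,d\langle E(\xi)k|k\rangle$ and using uniqueness of the Herglotz representation yields $\langle E(\Delta)k|k\rangle=\langle B(\Delta)k|k\rangle$; polarization and the fact that $P_\KK$ is an orthogonal projection then give $B(\Delta)=P_\KK E(\Delta)$, as in the proof of Corollary~\ref{cor}.

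\medskip

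The main obstacle is the reconstruction step: turning the abstract positive-real-part function $F(z)$ attached to $B$ into a legitimately pure, operator-valued characteristic function together with the correct partial isometry $\Tt$, and then verifying that gluing $S$ and $T$ produces a genuinely \emph{unitary} $U$ on a space containing $\KK$ isometrically. One must be careful that the defect spaces of the Nagy--Foias model match $\KK$ and $\UU$ on the nose (rather than merely up to the unitary equivalences allowed by ``coincidence'' of characteristic functions), and that the operator $\Tt$ recovered from the factorization of $F$ is exactly the restriction of the constructed $U$ to $U^*(\KK)$. Once these identifications are pinned down, Theorem~\ref{main} does all the analytic work and the remainder is the same polarization argument used for Corollary~\ref{cor}.
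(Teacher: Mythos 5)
Your proposal is correct and follows essentially the same route as the paper: Cayley-transform the Herglotz function $F(z)=\int_\T\frac{\xi+z}{\xi-z}\,dB(\xi)$ into a pure contraction-valued $\Theta(z)$ with $\Theta(0)=O$, realize it as the characteristic function of a c.n.u.\ contraction $S$ via Nagy--Foias, complete $S$ to a unitary $U=T+S$, and pull the compression identity back through Theorem~\ref{main} and Corollary~\ref{cor} plus polarization. The one obstacle you flag --- that the defect spaces match $\KK$ only up to the coincidence unitaries --- is exactly what the paper dispatches by conjugating with $\omega$, $\omega_*$ (so that $B(\Delta)=\omega_*B_1(\Delta)\omega_*^{-1}$) and then setting $\HH=\KK\oplus\ker D_{S^*}$ with $W_*=\omega_*\oplus I$.
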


\begin{proof} Let $k_1, k_2 \in \KK$. Define $\sigma_{k_1, k_2} (\Delta) = \left< B(\Delta) k_1| k_2 \right>$ for any Borel set $\Delta \subset \T$. Clearly, $\sigma_{k_1, k_2}$ is a Borel measure on $\T$. For any $z \in \D$ define $F(z): \KK \to \KK$ such that
\[\left< F(z)k_1| k_2 \right> = \int_{\T} \frac{\xi+z}{\xi-z} \; d\sigma_{k_1, k_2}(\xi)\]
for every $k_1, k_2 \in \KK$. It is easy to see that $F(z)$ is a linear operator with a positive real part and $F(0) =I$. It is also analytic as an operator-valued function in $z \in \D$. The inverse $(I + F(z))^{-1}$ is defined on a dense subset of $\KK$ since $F(z)$ has a positive real part. Set
$\Th(z)=(F(z)-I)(F(z)+I)^{-1}$ on that dense set. By continuity, $\Th(z)$ can be extended on the whole $\KK$ with $\|\Th(z) \| \leq 1$. Moreover, $\Th(z)$ is an analytic contraction-valued function with $\Th(0) = O$ (and hence is pure). Thus, there exists a completely non-unitary contraction $S: \HH_1 \to \HH_1$ whose characteristic function $\Th_S(z)$ coincides with $\Th(z)$. More precisely, there exist unitary operators $\omega: \DD_S \to \KK$ and $\omega_*: \DD_{S^*} \to \KK$ such that $\Th_S(z)={\omega_*}^{-1} \Th(z) \omega$. Define $\Tt={\omega_*}^{-1}  \omega: \DD_S \to \DD_{S^*}$ and set $T: \HH_1 \to \HH_1$ to be \[ Th =\begin{cases} \Tt h, &h \in \DD_S \\
0, &h \in \text{Ker} D_S. \end{cases}\]
Finally, define $U_1 = T+S: \HH_1 \to \HH_1$. Clearly, $U_1$ is unitary. Let $E_1(\Delta)$ be the spectral measure corresponding to $U_1$ and let $B_1(\Delta)$ be the generalized spectral measure such that
\[\left< (\Tt +\Th_S(z)) (\Tt - \Th_S(z))^{-1} s_1| s_2 \right> = \int_{\T} \frac{\xi+ z}{\xi -z}\; d\left<B_1(\xi)s_1|s_2\right>\]
for every $s_1, s_2 \in \DD_{S^*}$. By the previous Corrolary, for any Borel set $\Delta$ we have
\[ B_1(\Delta)=P_{\DD_{S^*}} E_1(\Delta). \]
Now, since
\[\left< (\Tt +\Th_S(z)) (\Tt - \Th_S(z))^{-1} s_1| s_2 \right> =\left< (I +\Th(z)) (I - \Th(z))^{-1} \omega_* s_1| \omega_* s_2 \right>\] and $F(z)=(I +\Th(z)) (I - \Th(z))^{-1}$, it follows that 
$B(\Delta)= \omega_* B_1(\Delta)\omega_*^{-1}$. 

Define $\HH = \KK \oplus \text{Ker}D_{S^*}$ and $W_* = \omega_* \oplus I: \HH_1 \to \HH$. Clearly, $W_*$ is unitary. Then
$ B(\Delta) = P_\KK E(\Delta)$ where $P_\KK = W_* P_{\DD_{S^*}} W_*^{-1}$ and $E(\Delta) = W_* E_1(\Delta) W_*^{-1}$. One readily sees that $P_\KK:\HH \to \HH$ defined this way is the orthogonal projection onto $\KK$ and $E(\Delta)$ is a spectral measure on $\HH$. 

\end{proof}

\section{Rank-one perturbations of partial isometries} 

Let $S:\HH \to \HH$ be a c.n.u. partial isometry with both defect indices equal to 1, i.e., $\partial_S= \partial_{S^*}=1$. Denote by $\KK$ and $\KKt$ the orthogonal complements of the final and the initial space of $S$, respectively. Fix two unit vectors $k \in \KK$ and $\tilde{k} \in \KKt$. For any   complex number $\alpha$ of modulus 1,  define the linear operator $U_\alpha$ by \[U_\alpha h = \begin{cases} Sh,  &\text{ if $h \perp \KKt$} \\
\alpha k, & \text{ if $h = \tilde{k}$}. \end{cases} \] Clearly, $U_\alpha$ are unitary operators; these are the only unitary rank-one perturbations of $S$. Consider $\KK_{\alpha}:=\overline{\text{span}}\{U_{\alpha}^n k : n \in \Z\}$. One can show by induction that for any $h\in \KK_{\alpha}^{\perp}$ and $\beta \in \T$ we have that $U_{\beta}^n h=S^n h$ and $U_{\beta}^{*n} h=S^{*n} h$. Therefore,  $\KK_\beta^\perp \subset \KK_\alpha^\perp$ and, by symmetry, $\KK_\beta^\perp = \KK_\alpha^\perp$. This space $\KK_\alpha^\perp$  is reducing for $S$ and $S$ is unitary there. Since $S$ is c.n.u., we have that $\KK_\alpha =\HH$ for all $\alpha \in \T$.

Next we will describe the spectrum of $U_\alpha$. Let $\Th_S(z)$ be the characteristic function of $S$. For each $z \in \D$, $\Th_S(z)$ is an operator between two one-dimensional spaces $\KKt$ and $\KK$. Denote by $\phi(z)$ the scalar valued analytic function such that $\Th_S(z)\tilde{k}=\phi (z)k$ for each $z\in\C$. Clearly, $|\phi(z)| \leq 1$. Define also $\Tt_\alpha:\KKt \to \KK$ to be the operator sending $\tilde{k}$ to $\alpha k$. Let $\sigma_{\alpha}$ be the measure on $\T$ for which $$\left< (\Tt_\alpha+\Th_S(z))(\Tt_\alpha-\Th_S(z))^{-1} k | k \right>=\int_\T \frac{\xi+z}{\xi -z } \; d\sigma_{\alpha}(\xi).$$ This is equivalent to $$\frac{\alpha+\phi (z)}{\alpha-\phi (z)}=\int_\T \frac{\xi+z}{\xi -z } \; d\sigma_{\alpha}(\xi),$$
which implies that $\sigma_{\alpha}$ is a Clark measure for $\phi(z)$. It is also immediate that the corresponding generalized spectral measure $B_{\alpha}(\Delta)$ is simply given by $B_{\alpha}(\Delta)k=\sigma_{\alpha}(\Delta) k$. If $E_{\alpha}(\Delta)$ is the spectral measure corresponding to $U_{\alpha}$ then it follows from Corollary~\ref{cor} that $$\left< E_{\alpha}(\Delta)k | k  \right>=\sigma_{\alpha}(\Delta).$$ Since $\overline{\text{span}}\{E_\alpha(\Delta)k : \Delta \text{ Borel subset of } \T\}=\HH$ the last equality proves the following:

\begin{corollary} \label{cor1} The spectrum of $U_{\alpha}$ coincides with the support of $\sigma_{\alpha}$. Thus, it consists of the union of those points in $\T$ at which $\phi(z)$ cannot be analytically continued and those $\zeta \in \T$ at which $\phi(z)$ is analytically continuable with $\phi(\zeta)=\alpha$. The set of eigenvalues of $U_{\alpha}$ coincides with the set of all the atoms of $\sigma_{\alpha}$.  
\end{corollary}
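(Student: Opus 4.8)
The plan is to derive everything from the identity $\langle E_{\alpha}(\Delta)k\,|\,k\rangle=\sigma_{\alpha}(\Delta)$ obtained above, via the spectral model of a unitary operator with a cyclic vector, and then to read $\supp\sigma_{\alpha}$ off the Herglotz representation of $\sigma_{\alpha}$ in terms of $\phi$. First I would observe that $k$ is $\ast$-cyclic for $U_{\alpha}$: the discussion preceding the corollary gives $\overline{\span}\{U_{\alpha}^{n}k:n\in\Z\}=\KK_{\alpha}=\HH$, and this closed span coincides with $\overline{\span}\{E_{\alpha}(\Delta)k:\Delta\in\BB\}$, since the former is a subspace reducing $U_{\alpha}$ and containing $k$, hence is invariant under every spectral projection $E_{\alpha}(\Delta)$ (these lie in the von Neumann algebra generated by $U_{\alpha}$). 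By the spectral theorem for a unitary operator with a cyclic vector, $U_{\alpha}$ is then unitarily equivalent to multiplication by the coordinate function $\xi$ on $L^{2}(\T,\sigma_{\alpha})$, with $k$ carried to the constant function $1$.

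Next I would invoke the elementary spectral picture of a multiplication operator $M_{\xi}$ on $L^{2}(\mu)$ for a finite positive Borel measure $\mu$ on $\T$. Its spectrum equals the closed support of $\mu$: if $\zeta_{0}\notin\supp\mu$ then $(\xi-\zeta_{0})^{-1}\in L^{\infty}(\mu)$ and inverts $M_{\xi}-\zeta_{0}I$; if $\zeta_{0}\in\supp\mu$, then every arc about $\zeta_{0}$ carries positive $\mu$-mass, and normalized indicators of shrinking such arcs show $M_{\xi}-\zeta_{0}I$ is not bounded below. Its point spectrum is the set of atoms of $\mu$: an eigenvector for $\zeta_{0}$ must be a multiple of $\chi_{\{\zeta_{0}\}}$, which is nonzero in $L^{2}(\mu)$ exactly when $\mu(\{\zeta_{0}\})>0$. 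Taking $\mu=\sigma_{\alpha}$ already yields the first sentence of the corollary.

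It remains to translate $\supp\sigma_{\alpha}$ into a statement about $\phi$ through the identity
\[ G(z):=\frac{\alpha+\phi(z)}{\alpha-\phi(z)}=\int_{\T}\frac{\xi+z}{\xi-z}\,d\sigma_{\alpha}(\xi),\qquad z\in\D. \]
Here $\Re G$ is the Poisson integral of $\sigma_{\alpha}$, and a standard boundary-value argument shows that an open arc $I\subset\T$ is disjoint from $\supp\sigma_{\alpha}$ if and only if $G$ continues analytically across $I$ with $\Re G\equiv0$ on $I$ (Schwarz reflection from the disc side; from the other side one uses that the boundary measure of the positive harmonic function $\Re G$ is $\sigma_{\alpha}$). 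Since $\Re\frac{\alpha+w}{\alpha-w}=\frac{1-|w|^{2}}{|\alpha-w|^{2}}$, the condition $\Re G\equiv0$ on $I$ says precisely $|\phi|\equiv1$ on $I$; and as $\phi$ is a scalar inner function ($\Th_{S}$ being inner), an analytic continuation of $\phi$ across $I$ automatically takes unimodular values there. One then checks that $G$ continues analytically across $I$ with $\Re G\equiv0$ on $I$ exactly when $\phi$ continues analytically across $I$ with $\phi\neq\alpha$ on $I$ (a point of $I$ where $\phi=\alpha$ is a pole of $G$, i.e.\ an atom of $\sigma_{\alpha}$, not a gap). Unwinding the quantifier over arcs: $\zeta\in\supp\sigma_{\alpha}$ iff $\phi$ cannot be continued analytically across $\zeta$, or it can be and $\phi(\zeta)=\alpha$; and in the latter case, since $\phi'(\zeta)\neq0$ by the boundary Schwarz lemma, $G$ has a simple pole at $\zeta$, so $\sigma_{\alpha}$ has an atom there, consistent with the description of the point spectrum.

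The routine parts are the cyclicity and the spectral-model computation. The real work is in the last step: making rigorous the equivalence between analytic continuation of the Herglotz transform $G$ (with purely imaginary boundary values) across an arc and analytic continuation of $\phi$, together with the residue computation identifying the atoms of $\sigma_{\alpha}$ with the solutions of $\phi(\zeta)=\alpha$. This is precisely the function-theoretic content of Aleksandrov--Clark theory, and it is also the one place where the inner-ness of $\Th_{S}$ (equivalently $S^{\ast n}\to O$) is used in an essential way.
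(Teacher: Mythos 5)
Your route is the one the paper intends: the paper's entire argument for this corollary is the sentence preceding it (cyclicity of $k$ for $U_\alpha$ plus $\left<E_\alpha(\Delta)k\,|\,k\right>=\sigma_\alpha(\Delta)$), and your reduction to the multiplication operator on $L^2(\sigma_\alpha)$, the identification of the spectrum with $\supp\sigma_\alpha$ and of the point spectrum with the atoms, and the Herglotz-transform translation of $\supp\sigma_\alpha$ into continuation properties of $\phi$ are exactly the details being suppressed. The first and third sentences of the corollary are fully proved by your argument.

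There is, however, one genuine gap in your treatment of the second sentence: you assume $\Th_S$ (hence $\phi$) is inner, and you even flag this as the place where inner-ness is ``used in an essential way.'' But the hypothesis of this section is only that $S$ is a c.n.u.\ partial isometry with $\partial_S=\partial_{S^*}=1$, which forces $\phi(0)=0$ and $\phi$ pure but \emph{not} inner; indeed the Remark immediately after the corollary stresses that the point of this result is to go beyond Clark's restricted-shift (i.e.\ inner) setting, so you cannot import inner-ness. Your argument breaks precisely at the step ``an analytic continuation of $\phi$ across $I$ automatically takes unimodular values there'': for non-inner $\phi$ this fails (take the c.n.u.\ partial isometry with characteristic function $\phi(z)=z/2$; then $\phi$ continues everywhere, never equals $\alpha$ on $\T$, yet $\Re G>0$ on all of $\T$ and $\sigma_\alpha$ is absolutely continuous with full support, so $\spec U_\alpha=\T$). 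The correct general statement — and the one your Schwarz-reflection argument actually proves if you track what it needs — is the Sz.-Nagy--Foias form: $\zeta\notin\spec U_\alpha$ iff $\phi$ continues analytically across an arc about $\zeta$ \emph{with unimodular values there} and $\phi(\zeta)\neq\alpha$. So you should either add the unimodularity of the continuation as part of the characterization (replacing ``$\phi$ is inner'' by ``$|\phi|\equiv 1$ on $I$'' throughout your last paragraph, after which the argument is correct), or note explicitly that the second sentence of the corollary, as literally stated, is being proved only in the inner case. Everything else — the cyclicity, the spectral model, the atoms-as-simple-poles computation via the boundary Schwarz lemma — is sound.
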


\begin{remark} There are several well-known conditions describing the atoms of a Clark measure $\sigma_{\alpha}$. An important one (goes back to M. Riesz) is the following: $\sigma_{\alpha}$ has a point mass at $\zeta$ if and only if $\phi(z)$ has the nontangential limit $\alpha$ at $\zeta$ and for all (or one) $\beta$ in $\T$ different from $\alpha$, $$\int_{\T}\frac{d\sigma_{\beta}(\zeta)}{|\xi-\zeta|^2}<\infty.$$  
\end{remark}

\begin{remark} In~\cite{Clark}, a similar description (although with different methods) of the spectra is obtained for unitary rank-one perturbations of a restricted shift. Notice that not every partial isometry can be represented as a restricted shift on the space $H^2(\D) \ominus \Th H^2(\D)$ of scalar-valued functions. Thus, the proposition above is not implied by the results in~\cite{Clark}. 
\end{remark}

Finally, we can also consider the more general case when $S$ is c.n.u. with equal (possibly infinite) defect indices. Let again $\KK$ and $\KKt$ be the orthogonal complements of the final and the initial space of $S$, respectively. For a unitary operator $A: \KKt \to \KK$, similarly as in~\cite{Ball}, we can define a unitary perturbation $U_A$ of $S$ by \[U_A h = \begin{cases} Sh,  &\text{ if $h \perp \KKt$} \\
Ah, & \text{ if $h \in \KKt$}. \end{cases} \]

As in the case of the rank-one perturbations, one can show that $S$ c.n.u. implies $\overline{\text{span}}\{U_A^nk : k \in \KK, n \in \Z \} =\HH$. To describe the spectrum of $U_A$ notice that by Theorem~\ref{main} we have 
\begin{align*}\left< (A+\Th_S(z))(A-\Th_S(z))^{-1} k | k \right> &=\left< (\Tt_A+\Th_S(z))(\Tt_A-\Th_S(z))^{-1} k | k \right>\\ &=\int_\T \frac{\xi+z}{\xi -z } \; d\left< E_A(\xi)k|k \right>.\end{align*}
Corollary~\ref{cor1} has the following analogue in this general case.

\begin{corollary} The spectrum of $U_A$ consists of the union of those points in $\T$ at which $\Th_S(z)$ cannot be analytically continued and those $\zeta \in \T$ at which $\Th_S(z)$ is analytically continuable with $\Th_S(\zeta)-A$ not invertible.
\end{corollary}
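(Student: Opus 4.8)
The plan is to run the argument that gave Corollary~\ref{cor1}, with the scalar Clark measure replaced by the generalized spectral measure $B_A$ attached to the operator Herglotz-type function $F_A(z):=(A+\Th_S(z))(A-\Th_S(z))^{-1}$. Two facts are already in hand. Since $(I-P_\KK)U_A=S$ and the restriction of $U_A$ to $U_A^{*}(\KK)=\KKt$ is exactly $A$, Theorem~\ref{main} together with Corollary~\ref{cor} gives, for each $k\in\KK$,
\[
\bigl\langle (A+\Th_S(z))(A-\Th_S(z))^{-1}k\mid k\bigr\rangle=\int_\T\frac{\xi+z}{\xi-z}\;d\bigl\langle E_A(\xi)k\mid k\bigr\rangle,\qquad z\in\D,
\]
with $E_A$ the spectral measure of $U_A$ and $B_A(\Delta)=P_\KK E_A(\Delta)$; and $S$ c.n.u.\ forces $\overline{\text{span}}\{U_A^{n}k:k\in\KK,\ n\in\Z\}=\HH$, exactly what is needed to pass from $\KK$ to all of $\HH$.

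First I would prove that $\spec(U_A)=\supp B_A$, where $\supp B_A:=\{\zeta_0\in\T:\ B_A(V)\neq O\text{ for every neighbourhood }V\ni\zeta_0\}$. One inclusion is immediate from $B_A(\Delta)=P_\KK E_A(\Delta)$. For the other, if $B_A(V)=O$ for some neighbourhood $V$ of $\zeta_0$, then $\|E_A(V)k\|^{2}=\langle B_A(V)k\mid k\rangle=0$ for every $k\in\KK$; since $E_A(V)$ commutes with $U_A$ it annihilates every $U_A^{n}k$, hence all of $\HH$ by the span condition, so $E_A(V)=O$ and $\zeta_0\notin\spec(U_A)$. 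This is the exact counterpart of the line ``$\overline{\text{span}}\{E_\alpha(\Delta)k\}=\HH$'' used for Corollary~\ref{cor1}.

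Next I would read $\supp B_A$ off from $\Th_S$ and $A$ through the classical link between a Herglotz integral and the support of its representing measure: $F_A$ continues to an analytic operator-valued function across $\zeta_0\in\T$ exactly when $B_A$ puts no mass near $\zeta_0$, i.e.\ iff $\zeta_0\notin\supp B_A$. Writing $F_A(z)=2A\,(A-\Th_S(z))^{-1}-I$, this reduces to locating where $R(z):=(A-\Th_S(z))^{-1}$ continues analytically. If $\Th_S$ continues analytically across $\zeta_0$ and $A-\Th_S(\zeta_0)$ is invertible, then $A-\Th_S(z)$ is invertible near $\zeta_0$ (invertibility is open) and $R$ continues there. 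Conversely, if $R$ continues to an analytic $\widetilde R$ near $\zeta_0$, then on $\D$ one has $\widetilde R(z)^{-1}=A-\Th_S(z)$, so $\|\widetilde R(z)^{-1}\|\le\|A\|+\|\Th_S(z)\|\le 2$; letting $z\to\zeta_0$ shows $\widetilde R(\zeta_0)$ is bounded below, while a weak-operator limit point of $\{A-\Th_S(z)\}_{z\to\zeta_0}$ is a bounded right inverse of $\widetilde R(\zeta_0)$, so $\widetilde R(\zeta_0)$ is invertible; then $\Th_S(z)=A-\widetilde R(z)^{-1}$ continues $\Th_S$ analytically across $\zeta_0$ with $A-\Th_S(\zeta_0)=\widetilde R(\zeta_0)^{-1}$ invertible. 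Hence $\zeta_0\notin\supp B_A$ iff $\Th_S$ continues across $\zeta_0$ and $A-\Th_S(\zeta_0)$ is invertible --- the complement of the asserted set.

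The step I expect to be the crux is the Herglotz--support correspondence invoked above. It is cleanest, and the equivalence ``$F_A$ continues across $\zeta_0$''$\;\Leftrightarrow\;$``$\zeta_0\notin\supp B_A$'' is exact, when $\Th_S$ is inner (equivalently $S^{*n}\to O$), since then $B_A$ is purely singular; this is the setting of Corollary~\ref{cor1}, and the argument above then goes through verbatim. When $\Th_S$ is not inner $B_A$ may have an absolutely continuous part --- governed by $I-\Th_S(\zeta)^{*}\Th_S(\zeta)$ --- so analytic continuability of $F_A$ is a priori only sufficient for $\zeta_0\notin\supp B_A$, and one gets without further work only the inclusion $\spec(U_A)\supseteq\{\zeta_0:\Th_S\text{ fails to continue across }\zeta_0,\text{ or }A-\Th_S(\zeta_0)\text{ is not invertible}\}$; the equality as stated then additionally requires that absolutely continuous density to vanish near the remaining points, which is automatic precisely in the inner case.
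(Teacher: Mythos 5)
Your proposal follows what is evidently the paper's intended route (the paper offers no argument for this corollary beyond the displayed Herglotz identity and the cyclicity statement): identify $\spec(U_A)$ with the support of the operator measure $B_A=P_\KK E_A$ via $\overline{\text{span}}\{U_A^nk : k\in\KK,\ n\in\Z\}=\HH$, and then translate membership in that support into continuation properties of $(A-\Th_S(z))^{-1}$. Both of your supporting lemmas are sound: $B_A(V)=O$ forces $E_A(V)=O$ by cyclicity, and an analytic continuation $\widetilde R$ of $(A-\Th_S)^{-1}$ across $\zeta_0$ has invertible boundary value (bounded below because $\|\widetilde R(z)^{-1}\|\le 2$ on $\D$, surjective via a weak-operator limit point of $A-\Th_S(z)$), which yields the continuation of $\Th_S$ itself.

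The caveat in your last paragraph is not excess caution; it identifies a genuine defect in the statement. The implication ``$F_A$ continues across $\zeta_0$ $\Rightarrow$ $\zeta_0\notin\supp B_A$'' fails wherever $B_A$ has a nonvanishing absolutely continuous part, and since ${\rm Re}\,F_A(\zeta)=(A-\Th_S(\zeta))^{-*}\bigl(I-\Th_S(\zeta)^*\Th_S(\zeta)\bigr)(A-\Th_S(\zeta))^{-1}$ a.e.\ on $\T$, that happens exactly where the boundary values of $\Th_S$ fail to be isometric. This is not hypothetical: already in the rank-one setting of Corollary~\ref{cor1}, $\phi(z)=z/2$ is the characteristic function of a c.n.u.\ partial isometry with both defect indices $1$; it continues analytically across all of $\T$ and never takes a unimodular value there, so the stated description predicts an empty spectrum, whereas $\sigma_\alpha$ is absolutely continuous with density bounded below and $\spec(U_\alpha)=\T$. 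Hence the corollary (and Corollary~\ref{cor1}) is correct only under the additional hypothesis that $\Th_S$ is inner, equivalently $S^{*n}\to O$ strongly, in which case your argument closes the proof. One terminological slip to fix: analytic continuability of $F_A$ near $\zeta_0$ is \emph{necessary}, not sufficient, for $\zeta_0\notin\supp B_A$; the inclusion $\spec(U_A)\supseteq\{\zeta_0 : \Th_S \text{ fails to continue, or } A-\Th_S(\zeta_0)\text{ not invertible}\}$ that you draw is indeed the one that follows unconditionally, so the substance of your analysis is right.
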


Further extensions of these results will be considered in a subsequent paper.

 \bs


\begin{thebibliography}{9}
\bibitem{Ball}{\sc Ball, J. A. and Lubin, A.} {\it On a class of contractive perturbations of restricted shifts,} Pacific J. Math., \textbf{63} (1976), 309 -323.

\bibitem{Clark}{\sc Clark, D. N.} {\it One dimensional perturbations of restricted shifts,} J. Analyse Math., \textbf{25} (1972), 169-191. 

\bibitem{Naimark1}{\sc Naimark, M. A.}{ \it Spectral functions of a symmetric operator,} Izv. Akad. Nauk. SSSR Ser. Mat., \textbf{4} (1940), 277-318.

\bibitem{Naimark2}{\sc Naimark, M. A.}{ \it On a representation of additive operator set functions,} Dokl. Acad. Sci. SSSR, \textbf{41} (1943), 373-375.

\bibitem{Poltoratski} {\sc Poltoratski,~A. and Sarason, D.} {\it Aleksandrov-{C}lark measures,} Recent advances in operator-related function theory, Contemp. Math., \textbf{393} (2006), 1-14.


\bibitem{Foias}{\sc Sz.-Nagy, B. and Foias, C.}  {\it Harmonic analysis of operators on {H}ilbert space,} Translated from the French and revised, North-Holland Publishing Co., Amsterdam, 1970.

\bibitem{Paulsen} {\sc Paulsen, V.} {\it Completely Bounded Maps and Operator Algebras,} Cambridge University Press, 2003.

\bibitem{Stinespring} {\sc Stinespring, W. F.} {\it Positive Functions on C*-algebras,} Proc.  Amer. Math. Soc., \textbf{6} (1955), 211-216.

 \end{thebibliography}
\end{document}